\documentclass{amsart}
\usepackage{titlecaps}

\usepackage{ mathrsfs }
\usepackage{ amsfonts }
\usepackage{ dsfont }
\usepackage{ amsthm }
\usepackage{ amsmath }
\usepackage{ amssymb }
\usepackage{ verbatim }
\usepackage{ tikz-cd }
\usepackage{ mathtools }
\usepackage{ color }
\usepackage{hyperref}
\usepackage{url,amssymb,enumerate,colonequals}
\usepackage{xcolor}

\usepackage[symbol]{footmisc}
\usepackage{comment}

\newtheorem{thm}{Theorem}

\newtheorem{lem}[thm]{Lemma}

\newtheorem{cor}[thm]{Corollary}

\theoremstyle{definition}

\newtheorem{remark}[thm]{Remark}

\newcommand{\PP}{\mathbb{P}}
 
\newcommand{\Z}{\mathbb{Z}}

\newcommand{\FF}{\mathbb{F}}

\newcommand{\Q}{\mathbb{Q}}

\DeclareMathOperator{\Gal}{Gal}

\DeclareMathOperator{\divv}{div}

\title[Primitive points on some low degree Fermat curves]{Primitive points on some low degree\\ Fermat curves}

\author{Maleeha Khawaja}
\address{Mathematics Institute, University of Warwick, CV4 7AL, United Kingdom}
\email{Maleeha.Khawaja@warwick.ac.uk}

\date{\today}

\keywords{Fermat curves, hyperelliptic curves, primitive points}
\subjclass[2020]{11D41, 11G30}

\begin{document}

\begin{abstract} 
Let $n\geq 3$ be an integer. Let $F_n$ be the Fermat curve defined by the Fermat equation $x^n+y^n=z^n$. For a curve $C/\Q$, we say an algebraic point $P\in C(\bar{\Q})$ is primitive if the Galois group of the Galois closure of the number field $\Q(P)$ is a primitive permutation group. Recall that $A_4$ is a primitive subgroup of $S_4$. We prove that there are no non-trivial quartic points on $F_n$ with Galois closure $A_4$, when $n=7$ and $n=8$. We also provide sufficient conditions for the non-existence of non-trivial points on the Fermat curves $F_6$ and $F_8$ defined over a given primitive number field of degree at least $3$.  
\end{abstract}

\maketitle

\section{Introduction}

For an integer $n\geq 3$, let $F_{n}/\Q$ be the Fermat curve of degree $n$ defined by the Fermat equation
\begin{equation}
    \label{eq:Fermat}
    x^n+y^n=z^n.
\end{equation}
The study of low degree points on the Fermat curve of degree $n$, when $n$ is a small integer, is of long-standing interest. We begin by mentioning a few results established in this area. Aigner~\cite{Aigner34}, in 1934, proved that $\Q(\sqrt{-7})$ is the only quadratic field to possess a non-trivial solution to the Fermat quartic $F_4$ (i.e. a solution for which $xyz\neq 0$). Mordell~\cite{Mo} reproved this result in 1967, by providing an explicit parametrization of all such points. In 1978, Gross and Rohrlich \cite{Rohrlich1978} completely determined all degree $d$ points on the Fermat curve $F_n$ for $n=5, 7 $ and $11$, for all $d\leq (n-1)/2$, through studying the Mordell--Weil group of the Jacobian of $F_n$. More recently, there has been increased interest in analysing the Galois groups of cubic and quartic points on certain low degree Fermat curves. Klassen and Tzermias~\cite{MatthewKlassen1997} showed that every quartic point on the Fermat quintic $F_5$ arises as the intersection of $F_5$ with a straight line passing through a rational point. 
Using this classification, Kraus~\cite{Kraus18} gave a complete description of the quartic points on $F_5$, proving that all but finitely many such points have Galois group $D_4$. Note that as $a,b\in \Z$ vary, we obtain cubic points defined over $\Q(\sqrt[3]{a^3+b^3})$. The discriminant of the given cubic field is $-27(a^3+b^3)^2$, which implies that the Galois closure of the cubic field has Galois group $S_3$. Bremner and Choudhry~\cite{BremnerChoudhry} have provided an explicit construction of infinitely many points on the Fermat cubic with cyclic Galois group. They also proved that all cubic points on the Fermat quartic $F_4$ have Galois group $S_3$ using the fact that they arise as the intersection of $F_4$ with a straight line passing through a rational point. In this paper, we are concerned with algebraic points on the Fermat curves of degrees $6, 7$ and $8$ which have primitive Galois group. We recall the notion of a primitive Galois group for the convenience of the reader. 
Let $G$ be a group acting transitively on a set $X$. Then $G$ is said to \textbf{act primitively} on $X$ if $G$ does not stabilise a non-trivial partition of $X$. Let $K$ be a number field with Galois closure $\tilde{K}$. Recall that $K$ is a \textbf{primitive} number field if there are no proper subextensions of $K/\Q$. Suppose $G=\Gal(\tilde{K}/\Q)$. Then the Galois correspondence implies that $G$ is a primitive permutation group if and only if $K$ is a primitive number field (see e.g. \cite[Lemma 28]{khawaja2023primitive} for a proof of this equivalence). 
It is well-known that if a group acts $2$-transitively on a set then that action is primitive. As a consequence, for an integer $d\geq 3$, the groups $A_d$ and $S_d$ are primitive permutation groups. We state our main results below.

\begin{thm}\label{thm:F8quartic}
    Let $n=7$ or $8$.
    Let $K$ be a quartic field such that $\Gal(\tilde{K}/\Q)=A_4$, where $\tilde{K}$ is the Galois closure of $K$. Then there are no non-trivial solutions to \eqref{eq:Fermat} with exponent $n$ over $K$.
\end{thm}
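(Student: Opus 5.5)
The approach is to push points on $F_n$ down to a hyperelliptic quotient, and then use the primitivity of $K$ to force the image point to be either rational or of full degree $4$. Dehomogenise \eqref{eq:Fermat} as $x^n+y^n=1$ and consider
\[
\phi\colon F_n\to C_n,\qquad (x,y)\mapsto (u,v)=(xy,\;x^n-y^n),
\]
where $C_n$ is the hyperelliptic curve $v^2=1-4u^n$. The identity $(x^n-y^n)^2=(x^n+y^n)^2-4(xy)^n$ shows that $\phi$ lands in $C_n$. The genus of $C_n$ is $3$ both for $n=7$ and for $n=8$.

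Let $P\in F_n(K)$ be non-trivial, with $K$ quartic and $\Gal(\tilde K/\Q)=A_4$. Then $\Q(\phi(P))\subseteq K$. Since $A_4\subset S_4$ is primitive, $K$ has no proper subfield other than $\Q$, so $\Q(\phi(P))$ is either $\Q$ or $K$.

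\textbf{Case 1: $\phi(P)$ is rational.} The first task is to determine $C_n(\Q)$. I would compute the rank of $J(C_n)(\Q)$ by $2$-descent, and then apply either a torsion/rank-$0$ argument or Chabauty--Coleman, which applies since the genus is $3$ and the rank should be small. The expected result is that only the obvious points occur: $u=0$, the points at infinity, and possibly a few others. For each such rational $(u,v)$, the numbers $x^n$ and $y^n$ are roots of $T^2-T+u^n$, so they lie in $\Q$ or in a quadratic field. Since $K$ has no quadratic subfield, $x^n,y^n\in\Q$. Then $x$ is a root of $T^n-a$ with $a\in\Q$, and $\Q(x)$ is $\Q$ or $K$. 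A quartic field generated by a root of a binomial $T^n-a$ has a Galois closure inside a Kummer-type extension, so its group is contained in a metacyclic group, never $A_4$. One can also see this directly: $\Q(x^2)$ or $\Q(\zeta)$-descent produces a quadratic subfield. The trivial values $u=0$ correspond to $xy=0$, which are excluded.

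\textbf{Case 2: $\phi(P)$ is a quartic point on $C_n$ with field $K$.} Let $D$ be the degree-$4$ divisor given by the sum of its Galois conjugates. If $D$ is the pullback of a degree-$2$ divisor on $\PP^1$ under the hyperelliptic map, then $u(\phi(P))$ has degree at most $2$. Primitivity then forces $u\in\Q$, so $v^2\in\Q$ and $\Q(v)$ has degree at most $2$, contradicting $[\Q(\phi(P)):\Q]=4$. Otherwise $D$ is determined, via Riemann--Roch, by its class $[D-4\infty]$ in $J(C_n)(\Q)$: the complete linear system has dimension $0$ outside the hyperelliptic family. So once $J(C_n)(\Q)$ is known, finite or explicitly generated, I would enumerate the finitely many candidate classes, compute the corresponding effective divisors in Magma, and check that none of them has Galois group $A_4$. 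Preferably I would show that none of them lifts to $F_n$ over a quartic field at all.

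The main obstacle is the arithmetic of $J(C_n)$, namely provably determining its Mordell--Weil group. If the rank is positive, the enumeration of non-hyperelliptic quartic divisors is no longer finite a priori and needs a relative Chabauty or Mordell--Weil sieve argument. If that happens for $v^2=1-4u^n$, I would first try the other quotients of $F_n$: for $n=8$, $v^2=1-u^8$ and the maps through $F_4$; for $n=7$, the quotients $y^7=x^a(1-x)^b$. I would look for one whose Jacobian has rank $0$ over $\Q$, and run the same primitivity dichotomy there.
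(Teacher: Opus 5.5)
The gap is in Case 2, at the claim that outside the hyperelliptic family ``the complete linear system has dimension $0$''. On a curve of genus $3$, Riemann--Roch gives $\ell(D)\ge 4-3+1=2$ for every degree-$4$ divisor, with equality unless $D$ is canonical. So the class of $D$ in $J(C_n)(\Q)$ determines only the linear system $|D|$, and outside the hyperelliptic family $|D|\cong\PP^1$ is a pencil of rational effective divisors, not a single divisor. Knowing $J(C_n)(\Q)$ (the paper finds $\Z/7\Z$ for $n=7$ and $\Z/4\Z$ for $n=8$, both of rank $0$) therefore only reduces you to finitely many pencils. The pencils with base points contain only reducible divisors; you still have to check this class by class. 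However, on $C_7$ the pencils spanned by $1,(v-1)/u^3$ and by $1,(v-1)/u^4$, and on $C_8$ the pencil spanned by $1,(v+1)/u^4$, are base-point free. They are degree-$4$ maps $g\colon C_n\to\PP^1$, and their fibres over $t\in\Q$ give infinitely many quartic points. So ``enumerate the candidate divisors and check their Galois groups'' is not a finite computation. Note also that your argument never uses that the group is $A_4$ rather than merely primitive.

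What is missing is an argument uniform in $t$, and this is where the hypothesis $\Gal(\tilde K/\Q)=A_4$ must enter. Take a plane model $F(r,s)=0$ with $r=u$ and $s=g$, which has degree $4$ in $r$. If the fibre over $s=t\in\Q$ is a Galois orbit with field $K$ and group $A_4$, then $\Delta_r(F)(t)$ is a rational square, so $t$ gives a rational point on the double cover $w^2=\Delta_r(F)(s)$. The paper does exactly this. For $n=7$, both pencils lead, after removing square factors and rescaling, to $H\colon y^2=2x^7-27$; its Jacobian has rank $0$ and trivial torsion, so $H(\Q)=\{\infty\}$. For $n=8$ one gets $y^2=-2x^3-8x$, a model of \texttt{64a4} with just two rational points, neither coming from a quartic point.

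For $n=8$ this pencil can even be handled by primitivity alone. Indeed $(v+1)/u^4=2/X$, where $X=(1-v)/(2u^4)$ is the $x$-coordinate of the degree-$2$ map $C_8\to E\colon Y^2=X^3+X$. So a point in a fibre over $t\in\Q^\times$ generates a field containing $\Q(\sqrt{8/t^3+2/t})$, which is quadratic because $E(\Q)=\{O,(0,0)\}$. For $n=7$ there is no such shortcut, since those two pencils have generic Galois group $S_4$.
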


Our main tool is the use of a morphism from the degree $n$ Fermat curve to a degree $n$ hyperelliptic curve (defined in Section~\ref{sec:hyperellipticFermat}). There are two advantages of working with the mentioned degree $n$ hyperelliptic curve. The obvious advantage is the significant reduction in the genera of the curves we are working with (the hyperelliptic curve has genus $3$ in both cases). Secondly, analysing the Riemann--Roch spaces of effective degree $4$ divisors on these curves yields an explicit parametrization of all quartic points with Galois group $A_4$. We note in passing that the existence of quartic points on the Fermat quartic with Galois group $A_4$ is currently unknown, and refer the interested reader to \cite[Section 4]{BremnerChoudhry} for a discussion of this problem.

We also use this family of hyperelliptic curves to prove sufficient conditions 
for the Fermat curves of degrees $6$ and $8$ to possess no non-trivial points 
defined over a given primitive number field of degree at least $3$.

\begin{thm}
    \label{thm:F6}
    Let $K$ be a primitive number field of degree at least $3$. 
    Suppose $E(K)=E(\Q)$, where $E/\Q$ is either of the two elliptic curves with Cremona label \texttt{27a3} or \texttt{432b1}.
    Then there are no non-trivial solutions to \eqref{eq:Fermat} for $n=6$ over $K$.
\end{thm}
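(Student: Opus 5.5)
We will deduce the statement from two explicit maps out of $F_6$ to elliptic curves of rank $0$ over $\Q$, together with the primitivity of $K$. The natural maps come from the hyperelliptic quotient of Section~\ref{sec:hyperellipticFermat}. Writing $u=x/z$, $v=y/z$, a point of $F_6$ satisfies $u^6+v^6=1$. Setting $X=u^2$ and $Y=v^3$ gives the morphism
\[
F_6\to C_1:\ Y^2=1-X^3 .
\]
Setting $X=u^2$, $Y=u^3v^3$ gives
\[
F_6\to C_2:\ Y^2=X^3-X^6 .
\]
Any further genus-one quotients obtained by swapping the roles of $u$ and $v$ or of $z$ are handled the same way. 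The curve $Y^2=1-X^3$ is isomorphic to $y^2=x^3+1$ up to twist, and a computation with the transformation $X\mapsto -X$ identifies it with \texttt{27a3} (or a curve isogenous over $\Q$, which suffices since the hypothesis is on $K$-points of a curve in the isogeny class). The other quotient, after dividing by $X^6$ and putting $s=1/X$, $t=Y/X^3$, becomes $t^2=s^3-1$, and its twist by $-3$ or by $2$ should give \texttt{432b1}. The first real step is to pin down exactly which Weierstrass models arise, and to check that they are \texttt{27a3} and \texttt{432b1} rather than mere isogenous curves. If they are only isogenous, I will note that a $\Q$-isogeny $\phi:E\to E'$ with dual $\hat\phi$ gives $[m]E'(K)\subseteq \phi(E(K))=\phi(E(\Q))\subseteq E'(\Q)$. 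Since $E'(\Q)$ is finite, this is not quite enough, so I expect the paper's models to be chosen to match exactly.

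Now let $P=(u,v)\in F_6(K)$ be non-trivial, so $uv\neq 0$. Its image on $C_1$ is $(u^2,v^3)$, a $K$-point of $E_1$. By hypothesis this image lies in $E_1(\Q)$, which is a finite, explicitly known torsion group of order at most $6$. So either $u^2\in\Q$ and $v^3\in\Q$, or the image is the point at infinity, which is excluded because $u$ is finite. Listing $E_1(\Q)$ forces $(u^2,v^3)$ to lie in a small finite set, such as $X\in\{0,1,-2\}$ in suitable coordinates. In particular $u^2\in\Q$.

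Here primitivity enters. Since $\Q(u^2)=\Q$ and $u\in K$, the field $\Q(u)$ has degree at most $2$ and sits inside $K$. Because $K$ is primitive of degree $\ge 3$, it has no subfield of degree $2$, so $u\in\Q$. Symmetrically, $v^3\in\Q$ gives $[\Q(v):\Q]\le 3$. If $[K:\Q]>3$ we get $v\in\Q$ immediately. The case where $K$ is cubic and $v$ generates $K$ is killed by the second curve: from $C_2$ we get $u^3v^3\in\Q$ and $u^2\in\Q$, but it is simplest to use the symmetric map $(u,v)\mapsto(v^2,u^3)$ to get $v^2\in\Q$. Then $v^2,v^3\in\Q$ with $v\ne0$ gives $v\in\Q$. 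This shows why two curves, or two maps, are needed, and I would arrange the argument so that $E_1$ controls $u^2$ and $v^2$ via the two symmetric maps, while $E_2$ (\texttt{432b1}) controls the cube-type coordinate. We thus obtain $u,v\in\Q$ with $u^6+v^6=1$ and $uv\neq0$, which contradicts Euler's theorem that $F_3$, and hence $F_6$, has only trivial rational points. The main obstacle I expect is the bookkeeping of matching each quotient with the precise Cremona model and making sure that the $E(\Q)$ lists and the primitivity argument together eliminate every case, including the case where $K$ is cubic.
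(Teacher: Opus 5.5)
There is a genuine gap at the very first step: the elliptic curves you map to are neither the curves in the hypothesis nor isogenous to them. Your $C_1\colon Y^2=1-X^3$ is $\Q$-isomorphic, via $X\mapsto -X$ with no twist needed, to $y^2=x^3+1$. That curve is \texttt{36a1}, of conductor $36$. Your $C_2$ normalises to $t^2=s^3-1$, of conductor $144$. By contrast, \texttt{27a3} is $y^2+y=x^3\cong y^2=x^3+16$, of conductor $27$, and \texttt{432b1} is $y^2=x^3-4$, of conductor $432$.

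Isogenous curves over $\Q$ have equal conductors, and here even the $3$-parts differ ($3^2$ versus $3^3$). So all four curves are merely $\bar{\Q}$-twists of one another, all with $j=0$. In particular, no quadratic twist $t^2=s^3-d^3$ of $C_2$ is $y^2=x^3-4$, since $d^3/4$ is never a rational sixth power. The hypothesis $E(K)=E(\Q)$ on one twist gives no control over the $K$-points of another. So the step ``the image is a $K$-point of $E_1$, hence lies in $E_1(\Q)$'' is unjustified, and your isogeny fallback is moot.

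What your argument actually proves is a different theorem, with the hypothesis placed on $y^2=x^3+1$. Incidentally, there the two symmetric maps give $u^2,u^3,v^2,v^3\in\Q$, hence $u,v\in\Q$, without needing primitivity. Note also that the statement assumes $E(K)=E(\Q)$ for only one of the two curves. So a plan in which $E_1$ and $E_2$ each handle part of the argument would not match the hypothesis either.

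The missing idea is the right quotient: the genus-$2$ curve $C_6\colon y^2=1-4x^6$, reached via $(\alpha,\beta)\mapsto(\alpha\beta,\alpha^6-\beta^6)$ (Lemma~\ref{lem:hyperellipticlem}). This curve has two degree-$2$ maps:
\begin{itemize}
\item $(x,y)\mapsto(-x^2,(y-1)/2)$ onto $y^2+y=x^3$ (\texttt{27a3});
\item $(x,y)\mapsto(1/x^2,y/x^3)$ onto $y^2=x^3-4$ (\texttt{432b1}).
\end{itemize}
Take whichever of the two curves satisfies $E(K)=E(\Q)$. The image $Q\in C_6(K)$ of a point of $F_6(K)$ lies over a rational point, so $[\Q(Q):\Q]\le 2$. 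Primitivity of $K$ (degree $\ge 3$) then forces $Q\in C_6(\Q)=\{(0,\pm1)\}$, i.e.\ $\alpha\beta=0$ (Lemmas~\ref{lem:mapequal} and~\ref{lem:C6rational}). Your primitivity mechanism, that a subfield of $K$ of degree $\le 2$ must be $\Q$, is exactly the one the paper uses. What has to change is the choice of quotient curves.
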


\begin{thm}
    \label{thm:F8}
    Let $E$ be the elliptic curve with Cremona label \texttt{64a4}. 
    Let $K$ be a primitive number field of degree at least $3$ such that $E(K)=E(\Q)$. 
    Then there are no non-trivial solutions to \eqref{eq:Fermat} for $n=8$ over $K$.
\end{thm}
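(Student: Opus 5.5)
The plan is to push a non-trivial point of $F_8$ over $K$ down to the elliptic curve $E$, where the hypothesis $E(K)=E(\Q)$ forces certain coordinates to be rational. Primitivity of $K$ then forces the point itself to be rational, and Fermat's theorem for exponent $4$ over $\Q$ finishes the argument.

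Suppose $(x:y:z)\in F_8(K)$ with $xyz\neq 0$, and dehomogenise so that $z=1$ and $x^8+y^8=1$. I use the morphism to a hyperelliptic curve: put
\[
u=xy,\qquad v=x^8-y^8 .
\]
Since $(x^8+y^8)^2-4x^8y^8=(x^8-y^8)^2$, this gives $v^2=1-4u^8$. This is a genus $3$ hyperelliptic curve $H_8$, and I take it to be (a model of) the curve of Section~\ref{sec:hyperellipticFermat}. Composing with $w=u^2$ gives a $K$-point $(w,v)$ on the genus one curve
\[
C:\ v^2=1-4w^4 .
\]
The curve $C$ has the rational point $(0,1)$. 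The standard quartic-to-cubic transformation sends $C$ to a Weierstrass model with $j=1728$, namely $Y^2=X^3+16X$, which is $\Q$-isomorphic to $Y^2=X^3+X$ via $X\mapsto 4X$, $Y\mapsto 8Y$. I would check, by computing the conductor ($64$) and comparing with Cremona's tables, that this is the curve \texttt{64a4}. If instead it is another curve in the isogeny class \texttt{64a}, I would compose with the rational isogeny to \texttt{64a4}. A $K$-point of \texttt{64a4} that is $\Q$-rational pulls back to a point of $C$ defined over an extension of $\Q$ of degree at most $\deg(\text{isogeny})$. I would then use primitivity, as below, to conclude that it is rational.

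Since the isomorphism $C\cong E$ is defined over $\Q$, the hypothesis $E(K)=E(\Q)$ gives $C(K)=C(\Q)$. Hence $w=(xy)^2\in\Q$ and $v=x^8-y^8\in\Q$. I handle separately the points of $C$ at infinity in a smooth model, since they are not covered by the affine coordinates. They correspond to values with $u=\infty$, which cannot occur for affine $x,y\in K$.

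Next, combining $x^8-y^8=v$ with $x^8+y^8=1$ gives $x^8,y^8\in\Q$. Now consider the tower $\Q(x^4)\subseteq\Q(x^2)\subseteq\Q(x)\subseteq K$. Each step has degree at most $2$, and $[\Q(x^4):\Q]\le 2$ because $x^8\in\Q$. Since $K$ is primitive, each of these fields is either $\Q$ or $K$. If $\Q(x)=K$, then going down the tower, each field is forced to be $K$ in turn, because a degree $\le 2$ step cannot drop from $K$ (of degree $\ge 3$) to $\Q$. Hence $\Q(x^4)=K$, contradicting $[\Q(x^4):\Q]\le 2<3$. Therefore $x\in\Q$, and likewise $y\in\Q$. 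Then $(x^2)^4+(y^2)^4=1$ is a non-trivial rational solution of the Fermat quartic, which is impossible by Fermat's theorem for exponent $4$.

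The main obstacle is the identification step: verifying that $C$, or whichever elliptic quotient of $H_8$ the construction naturally produces, is $\Q$-isomorphic to \texttt{64a4} rather than a twist or an isogenous curve. If a different quartic model (for example $v^2=w^4-4$ or $v^2=w^4+1$ after rescaling) is the one isomorphic to \texttt{64a4}, I would adjust the choice of $u,v$ accordingly, for instance taking $u=x/y$ or including factors of $2$. Once a $\Q$-isomorphism is fixed, the rest of the argument is formal.
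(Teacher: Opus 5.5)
Your proof is correct. Its first half is the paper's construction in a different form. The paper's double cover $C_8\to E$, $(x,y)\mapsto\bigl((1-y)/(2x^4),\,(y-1)/(2x^6)\bigr)$, depends on $x$ only through $x^2$. It therefore factors as your map $(u,v)\mapsto(u^2,v)$ followed by a $\Q$-isomorphism $v^2=1-4w^4\cong E$. So your identification is right: the quartic-to-cubic transformation at $(0,1)$ gives exactly $Y^2=X^3+16X$. The isogeny fallback you describe is never needed. It would not have worked automatically in any case, since a $4$-isogeny is not covered by the primitivity argument when $[K:\Q]=4$.

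The two routes then diverge.

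\begin{itemize}
\item The paper applies Lemma~\ref{lem:mapequal} once, to the double cover, to get $C_8(K)=C_8(\Q)$. It then needs the explicit set $C_8(\Q)=\{(0,\pm1)\}$ from Lemma~\ref{lem:C8rational}, which rests on the \texttt{Magma} computation $E(\Q)\cong\Z/2\Z$.
\item You never use $E(\Q)$. Rationality of the image point alone gives $x^8-y^8\in\Q$, hence $x^8,y^8\in\Q$. Primitivity applied along the chain of quadratic steps $\Q(x^4)\subseteq\Q(x^2)\subseteq\Q(x)$ forces $x,y\in\Q$. Fermat's classical descent for exponent $4$ then finishes the argument.
\end{itemize}

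In effect, you trade a machine computation of rational points for a classical theorem and a somewhat heavier use of primitivity (three quadratic steps rather than one). The paper's version follows the same template as its $F_6$ argument and yields the stronger intermediate statement $C_8(K)=C_8(\Q)$.
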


In the resolution of the Fermat equation over a fixed number field $K$ using the modular approach, it is often necessary to rule out the existence of non-trivial $K$-rational points on $F_n$ for small values of $n$ using a different argument (see e.g. \cite{JarvisMeekin} or \cite{KhawajaJarvis}). This is the intended application of these results. It would also be of interest to find families of number fields which satisfy the given criteria.\\

\textbf{Outline of the paper.} 
In Section~\ref{sec:divisors}, we recall some well-known results about divisors on curves. In Section~\ref{sec:hyperellipticFermat}, we define the family of hyperelliptic curves $C_n$ referred to above, and determine $C_n(\Q)$ for $n=6, 7$ and $8$. We prove Theorem~\ref{thm:F8quartic} for $n=7$ and $n=8$ in Sections~\ref{sec:F87} and \ref{sec:F88}, respectively. Finally, we prove Theorems~\ref{thm:F6} and \ref{thm:F8} in Sections~\ref{sec:proofF6} and \ref{sec:proofF8}, respectively. 

\section{Divisors on curves}\label{sec:divisors}

In this section, we recall some well-known results concerning divisors of algebraic points on curves. Let $C$ be a curve defined over $\Q$. 
Then a \textbf{divisor} $D$ on $C$ is a formal finite sum of algebraic points on $C$, that is,
\[
D = \sum_{P\in C(\bar{\Q})} n_{P}P,\qquad n_P\in\Z. 
\]
We say $D$ is an \textbf{effective divisor}, and write $D\geq 0$, if $n_P\geq 0$ for all $P\in C(\bar{\Q})$. The \textbf{Riemann--Roch space of} $D$, denoted by $L(D)$, is defined as
\[
L(D)=\{f\in \Q(C)^{\times}\; : \;\divv(f)+D\geq 0 \}\cup \{0\},
\]
and we denote its dimension by $\ell(D)$. 

\begin{thm}[Riemann--Roch]\label{thm:RR}
    Let $K_C$ be a canonical divisor on $C$.
    Then 
    \[
    \ell(D)-\ell(K_C-D) = \deg(D)-g+1.
    \]
\end{thm}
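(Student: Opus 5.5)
The statement is the classical Riemann--Roch theorem, which the paper only recalls, so I would give the standard cohomological proof for a smooth projective geometrically irreducible curve $C$ of genus $g$. Linear equivalence preserves $\deg(D)$, $\ell(D)$ and $\ell(K_C-D)$, so the identity need only hold for one divisor in each class; in particular the choice of canonical divisor $K_C$ is irrelevant. Working over $\bar{\Q}$ is harmless, since the Riemann--Roch spaces of a divisor defined over $\Q$ are compatible with base change.

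\textbf{Euler characteristic.} Attach to $D$ the invertible sheaf $\mathcal{O}_C(D)$. Its global sections are $H^0(C,\mathcal{O}_C(D))=L(D)$, so $h^0=\ell(D)$. Set $\chi(D)=h^0(\mathcal{O}_C(D))-h^1(\mathcal{O}_C(D))$. For a point $P$ there is an exact sequence
\[
0\to\mathcal{O}_C(D)\to\mathcal{O}_C(D+P)\to k(P)\to 0,
\]
where $k(P)$ is the skyscraper sheaf at $P$ with one-dimensional stalk. The long exact cohomology sequence, together with the vanishing of $H^i$ for $i\geq 2$ on a curve, gives $\chi(D+P)=\chi(D)+1$. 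Since any divisor is reached from $0$ by adding and subtracting points one at a time, induction yields
\[
\chi(D)=\deg(D)+\chi(\mathcal{O}_C)=\deg(D)+1-g.
\]
This uses $h^0(\mathcal{O}_C)=1$, because regular functions on a projective curve are constant, and $h^1(\mathcal{O}_C)=g$, taken as the definition of the arithmetic genus.

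\textbf{Serre duality.} The remaining input is $h^1(\mathcal{O}_C(D))=h^0(\mathcal{O}_C(K_C-D))=\ell(K_C-D)$, which I expect to be the main obstacle. I would prove it by the Weil--Serre method of r\'epartitions. One identifies $H^1(\mathcal{O}_C(D))$ with $\mathbb{A}/(\mathbb{A}(D)+\bar{\Q}(C))$, where $\mathbb{A}$ is the ring of ad\`eles of the function field and $\mathbb{A}(D)$ is the subspace of ad\`eles with $\operatorname{ord}_P\geq -n_P$ at every $P$. A meromorphic differential $\omega$ then defines a functional on this quotient by $r\mapsto\sum_P\operatorname{Res}_P(r_P\,\omega)$. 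Two things make this well defined:
\begin{itemize}
\item the residue theorem shows the functional vanishes on $\bar{\Q}(C)$;
\item the condition $\operatorname{div}(\omega)\geq D$ ensures it vanishes on $\mathbb{A}(D)$.
\end{itemize}
This gives an injection $\Omega(D)\to H^1(\mathcal{O}_C(D))^\vee$, where $\Omega(D)$ is the space of differentials $\omega$ with $\operatorname{div}(\omega)\geq D$. Injectivity follows from a local construction of an ad\`ele with nonzero residue. Surjectivity is the delicate step. The dual $J=\varinjlim_D H^1(\mathcal{O}_C(D))^\vee$ is a $\bar{\Q}(C)$-vector space, and one shows it has dimension at most one by a growth comparison: for a function $f$ of large degree, $\dim$ of the relevant pieces grows linearly in $\deg D$, using the Riemann inequality from the Euler characteristic step. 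Hence every functional comes from a differential. Finally, choosing $K_C=\operatorname{div}(\omega_0)$ gives $\Omega(D)\cong L(K_C-D)$ via $\omega\mapsto \omega/\omega_0$.

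\textbf{Conclusion.} Combining the two steps,
\[
\ell(D)-\ell(K_C-D)=\chi(D)=\deg(D)-g+1,
\]
which is the claimed identity.
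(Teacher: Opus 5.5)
The paper gives no argument of its own here. It simply cites Silverman, where the theorem is quoted without proof from standard sources (e.g.\ Hartshorne IV.1.3). Your sketch is a correct version of the standard proof. Its skeleton is exactly Hartshorne's: compute $\chi(\mathcal{O}_C(D))=\deg(D)+1-g$ by adding points through the skyscraper sequence, then identify $h^1(\mathcal{O}_C(D))$ with $\ell(K_C-D)$. The difference is that you prove the duality step by Serre's r\'epartition method rather than invoking the general duality theorem for projective schemes.

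What this buys is a self-contained argument whose only inputs are the residue theorem, finiteness of $h^1$, and the fact that meromorphic differentials form a one-dimensional $\bar{\Q}(C)$-space. The price is the identification $H^1(C,\mathcal{O}_C(D))\cong\mathbb{A}/(\mathbb{A}(D)+\bar{\Q}(C))$. You could avoid this entirely by working with that quotient from the start and running your induction with $\dim \mathbb{A}(D+P)/\mathbb{A}(D)=1$, as Serre does.

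Two steps should be stated more precisely.

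First, the growth comparison is not about a single function of large degree. Suppose $\alpha,\alpha'\in H^1(\mathcal{O}_C(D))^\vee$ were independent over $\bar{\Q}(C)$. Then $(f,g)\mapsto f\alpha+g\alpha'$ would embed $L(nP)^{2}$, of dimension at least $2(n+1-g)$, into $H^1(\mathcal{O}_C(D-nP))^\vee$, of dimension $n-\deg(D)+g-1$ once $n>\deg(D)$. This is impossible for large $n$.

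Second, knowing that the map from all differentials to $J$ is bijective does not by itself give surjectivity of $\Omega(D)\to H^1(\mathcal{O}_C(D))^\vee$. You need the sharper local statement: if the functional attached to $\omega$ kills $\mathbb{A}(D)$, then $\divv(\omega)\geq D$. This comes from the same construction you use for injectivity. If $v=\operatorname{ord}_P(\omega)<n_P$, the ad\`ele equal to $t_P^{-v-1}$ at $P$ and $0$ elsewhere lies in $\mathbb{A}(D)$ but has nonzero residue against $\omega$. So this is a gap in exposition, not in the argument.
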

\begin{proof}
    See, for example, \cite{SilvermanAEC}.
\end{proof}
We say an effective divisor $D$ is \textbf{special} if $\ell(K_C-D)>0$. 
Recall that a hyperelliptic curve $C/\Q$ admits a degree $2$ morphism $\pi$ to $\PP^1$ given by $\pi(x,y)=x$, and a \textbf{hyperelliptic divisor} on $C$ is one of the form $\pi^{*}(x)$ for $x\in\PP^1$. 
\begin{thm}[Clifford]\label{thm:Clifford}
    Let $D$ be a special divisor on $C$.
    Then 
    \[
    \ell(D)\leq \frac{\deg(D)}{2} + 1
    \]
    with equality if and only if $D=0$ or $D$ is a canonical divisor or $C$ is a hyperelliptic curve and $D$ is a multiple of a hyperelliptic divisor. 
\end{thm}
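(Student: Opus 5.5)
We prove Clifford's theorem by the classical argument, using only Riemann--Roch (Theorem~\ref{thm:RR}) and the fact that on a curve over an algebraically closed field (we pass to $\bar{\Q}$, which does not change $\ell$), adding a point raises $\ell$ by at most one. The main lemma is superadditivity: if $D_1,D_2$ are effective divisors with $\ell(D_1),\ell(D_2)\geq 1$, then
\[
\ell(D_1)+\ell(D_2)\leq \ell(D_1+D_2)+1.
\]
To prove it, consider the multiplication map $L(D_1)\times L(D_2)\to L(D_1+D_2)$. Let $|D_i|$ be the complete linear systems, of projective dimension $\ell(D_i)-1$. The addition map $|D_1|\times|D_2|\to|D_1+D_2|$ is a morphism of varieties. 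It has finite fibres, because an effective divisor decomposes as a sum of two effective divisors in only finitely many ways. Hence $(\ell(D_1)-1)+(\ell(D_2)-1)\leq \ell(D_1+D_2)-1$.

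Now let $D$ be special, so $\ell(K_C-D)\geq 1$. We may assume $\ell(D)\geq 1$, since otherwise the inequality is trivial; after replacing $D$ by a linearly equivalent divisor we may take $D$ effective. Apply the lemma to $D$ and $K_C-D$ (also replaced by an effective representative):
\[
\ell(D)+\ell(K_C-D)\leq \ell(K_C)+1=g+1.
\]
Riemann--Roch gives $\ell(D)-\ell(K_C-D)=\deg D-g+1$. Adding the two relations yields $2\ell(D)\leq \deg D+2$, which is the inequality.

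For the equality case, the cases $D=0$ and $D=K_C$ clearly give equality, and the hyperelliptic case follows by computing $\ell(r\,\pi^*(x))=r+1$ for $r\leq g-1$: the functions $1,x,\dots,x^r$ give the lower bound, and the inequality gives the upper bound. For the converse, suppose equality holds with $D\neq 0,K_C$, so that $0<\deg D<2g-2$. Write $\deg D=2r$ (equality forces even degree).
\begin{itemize}
\item If $r=1$, then $\ell(D)=2$ with $\deg D=2$, so $|D|$ gives a degree-$2$ map to $\PP^1$. Thus $C$ is hyperelliptic and $D$ is a hyperelliptic divisor.
\item If $r\geq 2$, we run an induction that removes points. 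Choose $P,Q$ general and use equality to force $\ell(D-P-Q)=\ell(D)-1$. This means $P+Q$ imposes only one condition. Iterating this shows that $|D|$ composed with the canonical map factors through a degree-$2$ map, giving $C$ hyperelliptic and $D\sim r\,\pi^*(x)$.
\end{itemize}

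The equality converse is the main obstacle. I would handle it as in Hartshorne IV.5.4. Pick $E\leq D$ effective of degree $2$ such that $\ell(D-E)=\ell(D)-1$, which is possible by choosing the points generically in the base-point-free part. Then apply the inequality to $D-E$ and induct downward to degree $2$, where the $r=1$ argument above applies.
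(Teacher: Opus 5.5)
\textbf{The inequality is fine; the equality converse for $r\ge 2$ has a genuine gap.}

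Your proof of the inequality is correct. It is the standard argument in Hartshorne IV.5.4: the addition map $|D|\times|K_C-D|\to|K_C|$ has finite fibres, then you add Riemann--Roch. The paper gives no proof of its own, only a citation to this. Your ``if'' direction of the equality statement is also fine, with two small omissions:
\begin{itemize}
\item you should note that $r\pi^*(x)$ is special because $K_C\sim(g-1)\pi^*(x)$;
\item for $r=1$ you should note that $|D|$ is base-point free, since a base point would give a degree-$1$ divisor with $\ell=2$, forcing $g=0$.
\end{itemize}

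The gap is in the converse for $r\ge 2$. Both versions of your induction step rely on choosing $P,Q$ (or $E=P+Q\le D$) ``generically'' so that $\ell(D-P-Q)=\ell(D)-1$. This is false. Equality for $D$ together with Clifford for $D-P-Q$ only gives $\ell(D-P-Q)\in\{\ell(D)-2,\ell(D)-1\}$, and general pairs give the first value. Take $D=r\pi^*(\infty)$ on a hyperelliptic curve. Then $L(D)=\langle 1,x,\dots,x^r\rangle$, and $P+Q$ imposes a single condition only when $x(P)=x(Q)$. Likewise, for a general member $\pi^*(x_1)+\dots+\pi^*(x_r)$ of this $|D|$, a degree-$2$ subdivisor imposes one condition only if it is one of the fibres $\pi^*(x_i)$. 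So the pairs your induction needs are exactly the fibres of the double cover whose existence is the content of the converse, and nothing in your sketch produces them. Even if you had such an $E$, induction would only give $C$ hyperelliptic and $D-E\sim(r-1)g^1_2$. Concluding $D\sim r\,g^1_2$ would still require showing that $E$ is a fibre.

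One way to close the gap:
\begin{enumerate}
\item By Riemann--Roch, $K_C-D$ also attains equality, so you may assume $2r=\deg D\le g-1$.
\item Pick $P\in\mathrm{Supp}(D)$, $Q\notin\mathrm{Supp}(D)$, and $D'\in|D|$ with $D'\ge P+Q$. This is possible since $\ell(D-P-Q)\ge r-1\ge 1$.
\item Set $F=\min(D,D')$ and $U=\max(D,D')$. Then $L(D)\cap L(D')=L(F)$ and $L(D)+L(D')\subseteq L(U)$, so $2r+2\le \ell(F)+\ell(U)$, with $0<\deg F<2r$.
\item If $U$ were non-special, Riemann--Roch would give $g\le 2r-\deg F/2<g$. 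So $U$ is special, and Clifford for $F$ and $U$ forces $\ell(F)=\deg F/2+1$.
\item Thus $F$ is a smaller nonzero, non-canonical divisor attaining equality, and induction shows $C$ is hyperelliptic.
\end{enumerate}
To get $D\sim r\,g^1_2$ you still need a separate argument on the hyperelliptic curve. Let $\pi^*B$ be the largest divisor pulled back from $\PP^1$ lying below $D$. When $\deg D\le g$, every point of $D-\pi^*B$ is a base point of $|D|$. Then $r+1=\ell(D)=\ell(\pi^*B)=\deg B+1$, so $\deg \pi^*B=\deg D$ and hence $D=\pi^*B$.
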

\begin{proof}
    See, for example, \cite[Chapter III.1]{Hartshorne}.
\end{proof}

\begin{cor}\label{cor:degree4}
    Suppose $C/\Q$ is a genus $3$ hyperelliptic curve.
    Let $D$ be an effective degree $4$ divisor on $C$. 
    Then $\ell(D)=2$, unless $D$ is a canonical divisor or a multiple of a hyperelliptic divisor in which case $\ell(D)=3$. 
\end{cor}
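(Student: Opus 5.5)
We apply Riemann--Roch (Theorem~\ref{thm:RR}) and Clifford's theorem (Theorem~\ref{thm:Clifford}) with $g=3$ and $\deg D=4$, treating the special and non-special cases separately.

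\emph{Lower bound.} On a genus $3$ curve the canonical divisor has degree $2g-2=4$, so $\deg(K_C-D)=0$. Riemann--Roch gives
\[
\ell(D)=\deg(D)-g+1+\ell(K_C-D)=2+\ell(K_C-D).
\]
Hence $\ell(D)\geq 2$ always.

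\emph{Non-special case.} Since $\deg(K_C-D)=0$, we have $\ell(K_C-D)\in\{0,1\}$, with $\ell(K_C-D)=1$ exactly when $K_C-D$ is linearly equivalent to $0$, i.e.\ when $D$ is a canonical divisor (in its linear equivalence class). If $D$ is not special, then $\ell(K_C-D)=0$ and $\ell(D)=2$.

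\emph{Special case.} If $D$ is special, Riemann--Roch gives $\ell(D)=3$. Clifford's theorem is consistent with this: its bound $\deg(D)/2+1=3$ is attained, and $D\neq 0$ since $\deg D=4$. By the equality clause, $D$ is then either canonical or a multiple of a hyperelliptic divisor, here twice a hyperelliptic divisor, $D\sim 2\pi^*(x)$. On a hyperelliptic curve of genus $3$ the canonical class is $(g-1)$ times the hyperelliptic class, i.e.\ $K_C\sim 2\pi^*(x)$, so the two exceptional cases coincide.

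\emph{Converse.} If $D$ is canonical or a multiple of a hyperelliptic divisor, then $D\sim K_C$, so $\ell(K_C-D)=1$ and $\ell(D)=3$. Otherwise $D$ is not special and $\ell(D)=2$, as shown above.

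The only point requiring care is the phrase ``canonical divisor'', which should be read up to linear equivalence: $\ell(D)$ depends only on the class of $D$. With that reading there is no genuine obstacle, since the dichotomy between $\ell(D)=2$ and $\ell(D)=3$ is simply whether $K_C-D$ is principal.
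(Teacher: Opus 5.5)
Your proof is correct and follows the paper's approach; the paper only says the corollary ``follows immediately'' from Riemann--Roch and Clifford's theorem, and you have supplied the details. You also observe that $\deg(K_C-D)=0$ already forces $\ell(D)\in\{2,3\}$, with $\ell(D)=3$ exactly when $D\sim K_C$. Clifford's theorem, together with $K_C\sim 2\pi^*(x)$, then only serves to show that the two exceptional cases coincide, which is the content the paper leaves implicit.
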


\begin{proof}
    The corollary follows immediately from Theorems~\ref{thm:RR} and \ref{thm:Clifford}. 
\end{proof}

\section{Hyperelliptic Fermat curves}\label{sec:hyperellipticFermat}

The morphism given in the following lemma is well-known but we are unable to find a reference. 

\begin{lem}
    \label{lem:hyperellipticlem}
    Let $n\geq 3$ be an integer. 
    Let $C_n$ be the degree $n$ hyperelliptic curve defined as
    \[
        C_{n}: y^2 = -4x^n + 1.
    \]
    There is a non-constant morphism 
    $\pi: F_n\rightarrow C_n$ given by 
    \[
    (\alpha:\beta:1)\mapsto (\alpha\beta,\; \alpha^n-\beta^n).
    \]
\end{lem}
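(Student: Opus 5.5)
The plan is to verify the defining equation of $C_n$ directly on the affine chart $z=1$, and then to extend to a morphism of smooth projective curves and check that it is non-constant.

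\textbf{Step 1: the affine identity.} Let $(\alpha:\beta:1)$ be a point of $F_n$, so $\alpha^n+\beta^n=1$. Put $x=\alpha\beta$ and $y=\alpha^n-\beta^n$. Then
\[
y^2=(\alpha^n-\beta^n)^2=(\alpha^n+\beta^n)^2-4\alpha^n\beta^n=1-4(\alpha\beta)^n=-4x^n+1 .
\]
Hence the image point satisfies the equation of $C_n$. This polynomial identity is the whole computational content of the lemma.

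\textbf{Step 2: extension to a morphism.} The two functions $x=\alpha\beta$ and $y=\alpha^n-\beta^n$ lie in the function field $\Q(F_n)$, and by Step 1 they satisfy the equation of $C_n$. They therefore define a $\Q$-algebra homomorphism from the function field of $C_n$ to $\Q(F_n)$, provided its image is not a constant field (see Step 3). Equivalently, they give a rational map $F_n\dashrightarrow C_n$. Since $F_n$ is a smooth projective curve, any rational map from it to a projective curve extends uniquely to a morphism. Here $C_n$ means the smooth projective model of the hyperelliptic curve. So $\pi$ extends to a morphism defined everywhere, including the $n$ points at infinity $z=0$ of $F_n$, where $\alpha$ and $\beta$ have poles.

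I would add a remark on the points at infinity. One need not describe their images explicitly. If desired, they can be read off from the weighted-projective description of $C_n$ (odd versus even $n$ behave differently). I expect this bookkeeping at infinity to be the only mildly delicate point, and the extension theorem for smooth curves sidesteps it entirely.

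\textbf{Step 3: non-constancy.} It suffices to show that $x=\alpha\beta$ is non-constant on $F_n$. The points $(1:0:1)$ and $(0:1:1)$ lie on $F_n$ and both give $x=0$. But $\alpha\beta$ also takes nonzero values: choose any $\alpha\neq0$ with $1-\alpha^n\neq 0$, and let $\beta$ be an $n$-th root of $1-\alpha^n$. So $x$ is non-constant, and hence $\pi$ is a non-constant morphism, which completes the argument.
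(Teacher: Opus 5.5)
Your proof is correct and follows the paper's route: Step 1 is exactly the paper's argument, the identity $(\alpha^n-\beta^n)^2=(\alpha^n+\beta^n)^2-4(\alpha\beta)^n=1-4(\alpha\beta)^n$. Your Steps 2 and 3 supply details the paper leaves implicit, namely extending the map to the smooth projective model of $C_n$ and checking non-constancy via $x=\alpha\beta$.
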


\begin{proof}
    Suppose $(\alpha:\beta:1)\in F_{n}(K)$. 
    Observe the identity given by
    \[
        1 - (\alpha^n - \beta^n)^2 
        = 
        (\alpha^n + \beta^n)^2 - 
        (\alpha^n - \beta^n)^2 
        = 4(\alpha\beta)^n.
    \]
    In particular
    \[
    (\alpha^n-\beta^n)^2 = -4(\alpha\beta)^n +1.
    \]
\end{proof}

We now determine $C_n(\Q)$ for $n=6, 7$ and $8$ as we shall require these computations in the proofs 
of the main theorems. For $n=6$ and $8$, the hyperelliptic curve $C_n$ admits a degree $2$ map $\pi$ 
to an elliptic curve $E$ of rank $0$ over $\Q$. Looking at the preimages of the points in $E(\Q)$ under 
$\pi$ allows us to determine $C_n(\Q)$ in a straightforward manner. We find that the Jacobian of the hyperelliptic curve 
$C_7$ has rank $0$ over $\Q$, and we are thus able to determine $C_7(\Q)$ by inspecting the Riemann--Roch spaces of degree 
$1$ divisors on $C_7$.

\begin{lem}\label{lem:C6rational}
    $C_6(\Q) = \{(0,1),\, (0,-1)\}$.
\end{lem}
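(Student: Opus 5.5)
We will use the degree $2$ map from $C_6$ to an elliptic curve obtained from the substitution $u=x^2$. Concretely, let $E_1$ be the curve $v^2=-4u^3+1$. Then $\phi\colon C_6\to E_1$, $(x,y)\mapsto(x^2,y)$, is a morphism defined over $\Q$, and $\phi(C_6(\Q))\subseteq E_1(\Q)$. To identify $E_1$, we put $X=-4u$ and $Y=4v$. This gives $Y^2=X^3+16$, which should be the curve with Cremona label \texttt{27a3}. We then invoke Cremona's tables (or a $2$-descent done in \texttt{Magma}): this curve has rank $0$ and $E_1(\Q)\cong\Z/3\Z$. The torsion points are $\infty$ and $(X,Y)=(0,\pm4)$, and in the original coordinates the affine ones are $(u,v)=(0,\pm1)$.

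Next we pull back. An affine rational point $(x,y)\in C_6(\Q)$ maps to an affine point of $E_1(\Q)$, so $x^2=0$ and $y=\pm1$. This yields exactly $(0,1)$ and $(0,-1)$, and both clearly lie on $C_6$.

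It remains to treat the points at infinity of $C_6$. Since $\deg(-4x^6+1)=6$ is even, the smooth model has two points at infinity. Their $y/x^3$-coordinates satisfy $(y/x^3)^2=-4$, so they are defined over $\Q(\sqrt{-1})$ and not over $\Q$. Alternatively, one can use the second quotient $(x,y)\mapsto(1/x^2,\,y/x^3)$ onto $w^2=u^3-4$, which is a twist related to \texttt{432b1}. Its rational points could be checked in the same way, but that is not needed once the points at infinity are seen to be irrational.

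The only non-formal input is the determination of $E_1(\Q)$, namely that the rank is $0$ and the torsion is $\Z/3\Z$. We cite this from the tables rather than reproving it. The torsion part can also be confirmed by Nagell--Lutz together with reduction modulo small primes: modulo $5$ the curve has $6$ points and modulo $7$ it has $3$ (for instance), which bounds the torsion by $3$.
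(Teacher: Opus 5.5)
Your proof is correct and is essentially the paper's argument: your map $(x,y)\mapsto(x^2,y)$ followed by the isomorphism $(u,v)\mapsto(-u,(v-1)/2)$ is exactly the paper's degree $2$ map $\pi(x,y)=(-x^2,(y-1)/2)$ onto the model $y^2+y=x^3$ of \texttt{27a3}, and your explicit check that the two points at infinity are defined over $\Q(\sqrt{-1})$ fills in what the paper leaves to ``looking at the preimage''. One small slip in your optional torsion check is that $Y^2=X^3+2$ over $\FF_7$ has $9$ points, not $3$; since $\gcd(6,9)=3$, the torsion bound of $3$ still holds.
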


\begin{proof}
    Let $E/\Q$ be the elliptic curve with Cremona label \texttt{27a3}, and consider the Weierstrass model of $E$ given by 
    \[
    E:\; y^2 + y = x^3.
    \]
    There is a degree $2$ map $\pi: C_6\rightarrow E$ given by 
    \[
    \pi(x,y) = (-x^2,\; (y-1)/2).
    \]
    Suppose $P\in C_6(\Q)$ then $\pi(P)\in E(\Q)$.
    Using \texttt{Magma}, we find that 
    \[
    E(\Q) = \Z/3\Z\cdot (0,0).
    \]
    By looking at the preimage of the points in $E(\Q)$ under $\pi$, we conclude that 
    \[
    P = (0,1) \qquad \text{ or } \qquad P= (0,-1).
    \]
\end{proof}

\begin{lem}\label{lem:C7rational}
    $C_{7}(\Q)=\{(0,1), (0,-1), \infty\}$.
\end{lem}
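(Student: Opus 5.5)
The plan follows the strategy sketched before Lemma~\ref{lem:C6rational}: use a Mordell--Weil computation on the Jacobian $J$ of $C_7$ together with Riemann--Roch spaces of degree $1$ divisors.

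First, $C_7: y^2=-4x^7+1$ has genus $3$, since the model has odd degree $7$. Hence there is a single point $\infty$ at infinity, and it is rational. We then check directly that $(0,\pm1)$ lie on $C_7$, which gives the inclusion $\supseteq$.

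Second, we show $J(\Q)$ is finite. We would compute in \texttt{Magma} an upper bound for the rank of $J(\Q)$ via a $2$-descent (\texttt{RankBound}) and verify that it equals $0$. We would then determine the torsion subgroup $J(\Q)_{\mathrm{tors}}$ by reducing modulo several primes $p\nmid 14$ of good reduction. Since $J(\Q)_{\mathrm{tors}}$ injects into $\#J(\FF_p)$ for odd $p$ of good reduction, taking the gcd of the $\#J(\FF_p)$ bounds it. We then exhibit enough explicit rational divisor classes to show that $J(\Q)$ equals the group they generate. Natural candidates are $[(0,1)-\infty]$ and $[(0,-1)-\infty]$, which are negatives of each other, and the class of $x$-coordinate divisors. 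Note that $\divv(y-1)=7(0,1)-7\infty$, so $[(0,1)-\infty]$ has order dividing $7$. We therefore expect $J(\Q)\cong\Z/7\Z$, generated by $D_0=[(0,1)-\infty]$.

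Third, we use the Abel--Jacobi embedding $\iota: C_7\to J$, $P\mapsto [P-\infty]$, which is injective because $g\geq 1$. For each of the (expected $7$) classes $kD_0$ we compute a degree $1$ representative $D_k=k(0,1)-(k-1)\infty$. By Riemann--Roch (Theorem~\ref{thm:RR}) and the fact that $C_7$ is not rational, $\ell(D_k)\leq 1$. Moreover, $\ell(D_k)=1$ exactly when $D_k$ is linearly equivalent to a single effective point $P$, in which case $P$ is unique. Computing the Riemann--Roch spaces for $k=0,\dots,6$ should give effective points only for $k=0,1,-1$, namely $\infty$, $(0,1)$ and $(0,-1)$. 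The last of these comes from $(0,1)+(0,-1)\sim 2\infty$, since $\divv(x)=(0,1)+(0,-1)-2\infty$. This shows that every $P\in C_7(\Q)$ is one of the three listed points.

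The main obstacle is the second step: proving that the rank of $J(\Q)$ is $0$ and pinning down the torsion exactly. The descent bound may fail to be sharp, in which case we would exploit the extra automorphism $x\mapsto\zeta_7x$, or try a $2$-Selmer computation over a suitable field. Once $J(\Q)$ is known, the Riemann--Roch computations are routine.
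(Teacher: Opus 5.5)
Your proposal is correct and follows the paper's proof: rank $0$ via \texttt{RankBounds}, then $J(\Q)\cong\Z/7\Z$ generated by $[(0,1)-\infty]=[\infty-(0,-1)]$ (the paper's generator), then the Riemann--Roch spaces of the seven degree-$1$ representatives, which yield exactly $\infty$ and $(0,\pm1)$. The only difference is how you pin down the torsion: you bound it by reduction modulo primes instead of using the M\"uller--Reitsma algorithm. That works here, for example because $\#J(\FF_3)=1+3^3=28$ and $\#J(\FF_{29})$ is odd ($4x^7-1$ is irreducible over $\FF_{29}$), and your observation $\divv(y-1)=7(0,1)-7\infty$ justifies the order-$7$ claim that the paper merely asserts.
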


\begin{proof}
    We first determine $J(\Q)$, where $J$ is the Jacobian of $C_7$. Using \texttt{Magma}, we find that $J$ has rank $0$ over $\Q$ (using the \texttt{RankBounds} command). To compute the torsion subgroup of $J(\Q)$, we apply the \texttt{Magma} implementation of an algorithm due to M\"{u}ller and Reitsma~\cite{Muller}. This algorithm is based on the explicit theory of Kummer varieties associated to genus $3$ hyperelliptic curves due to Stoll~\cite{Stoll17}. The aforementioned algorithm returns that the torsion subgroup of $J(\Q)$ has order $7$. Let 
    \[
    P_0=\infty,\quad P_1=(0,-1),\quad P_2=(0,1)\in C_7(\Q).
    \]
    The point $[P_0-P_1]\in J(\Q)$ has order 7. 
    Therefore
    \[
    J(\Q)=\Z/7\Z\cdot [P_0-P_1].
    \]
    Suppose $P$ is a rational point on $C_7$. 
    Then
    \[
        [P]-[P_1] \sim a[P_0-P_1]
    \]
    for an integer $0\leq a\leq 6$, where $\sim$ denotes linear equivalence on $J$. In other words,
    \[
    [P] = [P_1 + a(P_0-P_1)] + \divv(f)
    \]
    where $f\in L_a=L([P_1+ a(P_0-P_1)])$. We used \texttt{Magma} to compute a basis of the Riemann--Roch space $L_a$ for each value of $a$ in the range given above. 
    \begin{itemize}
    \item For $a\geq 3$, the dimension of $L_a$ equals $0$.
    \item 
    If $a=2$ then 
    \[
    [P] = [2P_0-P_1] + \divv(f)
    \]
    where $f\in L(2P_0-P_1)$. A basis of $L(2P_0-P_1)$ is given by $x$. Note that $\divv(x)=P_1+P_2-2P_0$. Thus in this case $P=P_2$.
    \item 
    If $a=1$ then 
    \[
    [P]=[P_0]+\divv(f)
    \]
    where $f\in L(P_0)$. A basis of $L(P_0)$ is given by $1$. Thus $P=P_0$.
    \item If $a=0$ then
    \[
    [P]=[P_1] +\divv(f)
    \]
    where $f\in L(P_1)$.
    A basis of $L(P_1)$ is given by $1$. Thus $P=P_1$.
    \end{itemize}
\end{proof}

\begin{lem}\label{lem:C8rational}
    $C_8(\Q) = \{(0,1),\, (0,-1)\}$.
\end{lem}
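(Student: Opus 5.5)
We follow the strategy of Lemma~\ref{lem:C6rational}: map $C_8$ by a degree $2$ morphism to a rank $0$ elliptic curve, namely the curve with Cremona label \texttt{64a4} that appears in Theorem~\ref{thm:F8}. The curve $C_8: y^2=-4x^8+1$ is invariant under $x\mapsto -x$, and its quotient by this involution is $v^2=-4u^4+1$ with $u=x^2$. That quotient is a genus $1$ curve with the rational point $(0,1)$, so it is an elliptic curve.

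First, I would write down an explicit Weierstrass model. Setting $s=-u^2$ gives $v^2=4s^2+1$, which is conic-like and not useful. Instead I would use the quartic-to-cubic transformation based at the rational point $(u,v)=(0,1)$. For example, put
\[
v = 1 - 2tu^2 \quad\text{or}\quad X = \frac{2(1+v)}{u^2},\quad Y = \frac{4(1+v)}{u^3}\cdot(\text{suitable factor}),
\]
and solve to land on a model such as $Y^2=X^3+4X$ or $Y^2=X^3-4X$. Both are twists in the $j=1728$ family; the conductor $64$ curves include $y^2=x^3-4x$ and $y^2=x^3+4x$. I would then use \texttt{Magma} to identify the resulting curve with \texttt{64a4}, or with an isogenous curve of rank $0$. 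Composing the two steps gives a degree $2$ morphism $\pi: C_8\to E$, of the form $(x,y)\mapsto (\text{rational function of } x^2,\ y)$.

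Second, I would compute $E(\Q)$ with \texttt{Magma}: the rank is $0$, and the torsion is small, of order $2$ or $4$. Third, I would take the preimage under $\pi$ of each point of $E(\Q)$. Each preimage consists of points $(x,y)$ with $x^2=u_0$ for a specific rational or infinite value $u_0$ coming from the quartic model. Only $u_0=0$ should yield rational $x$. This gives $(0,\pm1)$, while values like $u=\infty$ give no rational points because the leading coefficient $-4$ is not a square. The two points at infinity on $C_8$ are defined over $\Q(\sqrt{-1})$, hence are not rational, so $C_8(\Q)=\{(0,1),(0,-1)\}$.

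The main obstacle is getting the explicit transformation right, so that the target really is the rank $0$ curve \texttt{64a4} and not a rank $1$ twist. I would settle this by computing the model directly in \texttt{Magma} (e.g.\ via \texttt{EllipticCurve} on the genus one quartic with the marked point $(0,1)$) and checking its Cremona label. As a fallback, if the quotient turned out to have positive rank, I would use the other involution $x\mapsto -x$, $y\mapsto -y$, or the map $x\mapsto ix$ over $\Q$ after twisting, to get a different elliptic quotient and repeat the argument.
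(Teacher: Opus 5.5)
Your proposal is correct and takes the paper's approach: the paper's map $(x,y)\mapsto\bigl((1-y)/(2x^4),\,(y-1)/(2x^6)\bigr)$ to $E:y^2=x^3+x$ (\texttt{64a4}) is exactly the quotient by $x\mapsto -x$ followed by a quartic-to-cubic change of variables, and the pullback of $E(\Q)\cong\Z/2\Z$ gives $C_8(\Q)=\{(0,\pm1)\}$. Your own candidate $X=2(1+y)/x^4$, $Y=4(1+y)/x^6$ already lands on $Y^2=X^3+16X\cong$ \texttt{64a4}, not on $X^3\pm4X$ as you guessed (and $y^2=x^3+4x$ has conductor $32$, not $64$), so the twist worry does not arise.
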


\begin{proof}
Let $E/\Q$ be the elliptic curve with Cremona label \texttt{64a4}, and consider the Weierstrass model of $E$ given by 
\[
E:\; y^2 = x^3 + x.
\]
There is a degree $2$ map $\pi: C_8\rightarrow  E$
given by 
\[
\pi(x,y) = \left(\dfrac{1-y}{2x^4},\;\dfrac{y-1}{2x^6}\right).
\]
If $P\in C_8(\Q)$ then $\pi(P)\in E(\Q)$.
We find using \texttt{Magma} that
\[
E(\Q) = (0,0)\cdot \Z/2\Z.
\]
It immediately follows that 
\[
C_8(\Q)=\{(0,1),\; (0,-1)\}.
\]
\end{proof}

We also make repeated use of the following observation throughout the paper.

\begin{lem}
    \label{lem:mapequal}
    Let $C, C'$ be curves defined over $\Q$. 
    Let $m\geq 2$ be an integer. 
    Suppose there is a degree $m$ map $\pi:C\rightarrow C'$ defined over $\Q$. Let $K$ be a primitive number field of degree $d > m$ such that $C'(K)=C'(\Q)$. 
    Then $C(K)=C(\Q)$.
\end{lem}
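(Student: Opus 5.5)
Take any point $P\in C(K)$ and set $Q=\pi(P)$. Since $\pi$ is defined over $\Q$, $Q$ lies in $C'(K)=C'(\Q)$. So $P$ lies in the fibre $\pi^{-1}(Q)$, and the task is to show that every $K$-point of such a fibre over a rational point is already rational.

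The fibre $\pi^{-1}(Q)$ is the support of the pullback divisor $\pi^{*}(Q)$. Because $Q$ is rational and $\pi$ is defined over $\Q$, this divisor is defined over $\Q$ and has degree $m$. Hence $\Gal(\bar{\Q}/\Q)$ permutes its support, and the Galois orbit of $P$ is contained in a set of at most $m$ points. It follows that the residue field satisfies $[\Q(P):\Q]\leq m$. (I write $\Q(P)$ for the field generated by the coordinates of $P$ in some affine chart, which is the same as the fixed field of the stabiliser of $P$.)

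Now bring in primitivity. Since $P\in C(K)$, we have $\Q(P)\subseteq K$. As $K$ is a primitive number field, its only subfields are $\Q$ and $K$. If $\Q(P)=K$, then $d=[K:\Q]=[\Q(P):\Q]\leq m$, which contradicts $d>m$. Therefore $\Q(P)=\Q$, so $P\in C(\Q)$. This gives $C(K)\subseteq C(\Q)$, and the reverse inclusion is trivial.

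The only point needing care is the claim that the degree of $P$ is at most $m$ when $P$ might be a ramification point or a point at infinity or singular point of a chosen model. This is handled by working with the smooth projective models, where $\pi^{*}(Q)$ is a rational effective divisor of degree $m$ whose support is Galois-stable of size at most $m$. The orbit bound then holds regardless of multiplicities. I do not expect any further obstacle.
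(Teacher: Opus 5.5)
Your proposal is correct and follows essentially the same route as the paper. Both arguments observe that $\pi(P)$ is rational, bound $[\Q(P):\Q]\leq m<d$ using the fibre of $\pi$, and then use primitivity of $K$ together with $\Q(P)\subseteq K$ to force $\Q(P)=\Q$. Your version also spells out the degree bound through the Galois-stable fibre $\pi^{*}(\pi(P))$, a step the paper takes for granted.
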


\begin{proof}
    Let $P\in C(K)$. Under the assumptions of the theorem, $\Q(\pi(P))=\Q$. Thus 
    \[
    \deg(\Q(P))\leq \deg(\pi)=m < d.
    \]
    Note that $\Q(P)\subset K$. 
    It immediately follows that $\Q(P)=\Q$, since $K$ is a primitive number field. 
\end{proof}

\begin{remark}
In our applications of Lemma $6$, the curve $C'/\Q$ is an elliptic curve. In this case, for a given number field $K$, the implementation of certain descent algorithms in \texttt{Magma} \cite{Magma} means the determination of $C'(K)$ is often a reasonable task.
\end{remark}

\section{Proof of Theorem~\ref{thm:F8quartic}: $n = 7$}\label{sec:F87}

Let $K$ be a quartic number field such that $\Gal(\tilde{K}/\Q)=A_{4}$, where $\tilde{K}$ denotes the Galois closure of $K$. Suppose $P\in F_{7}(K)$ is a non-trivial point. 
Let $\pi$ be the morphism defined in Lemma~\ref{lem:hyperellipticlem}. 
Then $\pi(P)=Q$ where $Q\in C_7(K)$.
Let
\[
P_0=\infty, \quad P_1=(0,-1), \quad P_2=(0,1) \in C_7(\Q). 
\]
Write $J$ for the Jacobian of $C_7$. 
Write $D$ for the effective degree $4$ divisor obtained from taking the sum of the Galois conjugates of $Q$. Then $D-4P_0\in J(\Q)$. 
Recall from the proof of Lemma~\ref{lem:C7rational} that
\[
    J(\Q)=\Z/7\Z\cdot [P_1-P_0].
\]
Therefore
\[
D-4P_0\sim a(P_1-P_0)
\]
for some integer $0\leq a\leq 6$, where $\,\sim\,$ denotes linear equivalence on $J$. 
Therefore 
\[
D = 4P_0 + a(P_1-P_0) + \divv(f)
\]
where $f\in L_a=L(4P_0 + a(P_1-P_0))$. We note that the hyperelliptic divisor is $2P_0$.
Moreover, the divisor of the differential $dx/y$ is $4P_0$. 
It follows from Corollary~\ref{cor:degree4} that the Riemann--Roch space $L_a$ has dimension $2$ for $a\neq 0$ and dimension $3$ for $a=0$. First suppose $a=0$. 
Then 
\[
D = 4P_0 + \divv(f)
\]
where $f\in L(4P_0)$. A basis of $L(4P_0)$ can be computed in \texttt{Magma}, and one is given by 
\[
1,\; x,\; x^2.
\]
In this case, the $x$-coordinate of $Q\in C_7(K)$ is the root of a quadratic polynomial. Write $Q=(a,b)\in C_7(K)$. There are three cases to consider.
\begin{itemize}
    \item \textbf{Case 1: $a,b\in \Q$.} In this case, by Lemma~\ref{lem:C7rational},
\[
Q=(a,b)\in C_{7}(\Q)=\{ \infty, \; (0,-1),\; (0,1) \}.
\]
In each case, $Q$ corresponds to a trivial point $P\in F_7(K)$.
    \item \textbf{Case 2:} $a\in \Q$, $b^2\in L/\Q$, where $[L:\Q]=2$.
In this case, $Q$ is defined over a quadratic field. 
Recall that $K$ is a primitive number field. 
Thus $L\not\subseteq K$ and $Q\not\in C_7(K)$. 

\item \textbf{Case 3:} $a\in L/\Q$, where $[L:\Q]=2$.
Again, since $K$ is a primitive number field, $Q\not\in C_7(K)$.
\end{itemize}

It remains to consider the cases $a\neq 0$. 
Write $g, h$ for a basis of $L_a$. 
Then 
\[
f = g - uh
\]
for some $u\in\Q$. 
For $a=1, 2, 5$ and $6$, the linear system has fixed base points and therefore the divisor $D$ is reducible. 
For example, when $a=1$ a basis for $L_a$ is given by $g=x$ and $h=1$. Thus,
\[
D+\divv(x-u)=3P_0+P_1+\divv(x-u)=P_0+P_1+(u,v)+(u,v^\prime)
\]
where $v=\sqrt{-4u^7+1}$ and $v^\prime=-\sqrt{-4u^7+1}$.
For all such $a$, we conclude $Q\not\in C_7(K)$ as $D$ is irreducible.  
It remains to consider $a=3$ and $a=4$. 
First suppose $a =3$.
A basis of $L_a$ is given by 
\[
g\coloneqq\frac{y-1}{x^3},\quad  h\coloneqq1.
\]
Then
\[
D = 4P_0 + 3(P_1-P_0) + \divv(g-u)
\]
for some $u\in\Q$. 
We can view $g$ as a degree $4$ morphism $C_7\rightarrow \PP^1$, and through this lense $D=g^{-1}(u)$.
Let $r=x,\; s=\frac{y-1}{x^3}$. 
We obtain a plane model for $C_7$ given by 
\[
C(r,s):\;4r^4+s^2r^3+2s=0.
\]
With respect to the plane model $C$, the degree $4$ morphism $g: C\rightarrow \PP^1$ is $g(r,s)=s$.
The discriminant of $C$ with respect to $r$ is
\[
\Delta_{r}(C)=(2s)^2\cdot (-3^3s^8+2^{15}s).
\]
Note that $\Delta(\mathcal{O}_{K})$ is a rational square, where $\mathcal{O}_{K}$ denotes the ring of integers of $K$, since $\Gal(\tilde{K}/\Q)=A_{4}$. 
It follows that $\Delta_{r}(C)$ is a rational square.
Write
\[
\frac{\Delta_r(C)}{(2s^5)^2}=\frac{2^{15}}{s^7}-3^3.
\]
It immediately follows that
\[
\left(\frac{2^2}{s},\, \frac{\sqrt{\Delta_r(C)}}{2s^5}\right)\in H(\Q)
\]
where $H/\Q$ is the hyperelliptic curve defined by
\[
H \; :\; y^2=2 x^7-3^3.
\]
It suffices to prove $H(\Q)=\{\infty\}$, since $\infty\in H(\Q)$ does not correspond to a quartic point $(r,s)$ on $C$. Write $J_{H}$ for the Jacobian of $H$. 
Using \texttt{Magma}, we find that $J_{H}$ has rank $0$ over $\Q$ (using the \texttt{RankBounds} command), and that $J_{H}$ has good reduction away from the primes $2, 3$ and $7$. Recall that, for a prime $p$ of good reduction, the torsion subgroup of $J_H(\Q)$ injects into $\tilde{J}(\FF_p)$ (see, e.g., \cite[Appendix]{Katz}). 
By computing the greatest common divisor of $\tilde{J}(\FF_p)$ for the first $11$ primes of good reduction for $J$, we find that $J_{H}(\Q)$ is trivial. This proves that $H(\Q)=\{\infty\}$.  

It remains to consider the case $a=4$. In this case, a basis of $L_a$ is given by 
\[
    m\coloneqq \frac{y-1}{x^4},\qquad n\coloneqq 1.
\]
Similarly to the previous case, we obtain a plane model for $C_7$ given by 
\[
X(v,w):\; w^2v^4+4v^3-2w = 0,
\]
where $v=x, \; w = (y-1)/x^4$ and the degree $4$ morphism $X\rightarrow \PP^1$ is given by $m(v,w)=w$.
The discriminant of $X$ with respect to $v$ is 
\[
\Delta_v(X)=2^{10}w^{2}(-2w^7-27),
\]
which we can rewrite as 
\[
\frac{\Delta_v(X)}{2^{10}w^2} = -2w^7-27.
\]
It follows that $(-w, \sqrt{\Delta_w(X)}/2^{5}w)\in H(\Q)$, where $H$ is the hyperelliptic curve defined above.
Since $H(\Q)=\{\infty\}$ this completes the proof.

\section{Proof of Theorem~\ref{thm:F8quartic}: $n=8$}\label{sec:F88}

Let $K$ be a quartic number field such that $\Gal(\tilde{K}/\Q)=A_{4}$, where $\tilde{K}$ denotes the Galois closure of $K$. Suppose $P\in F_{8}(K)$ is a non-trivial point. 
Let $\pi$ be the morphism defined in Lemma~\ref{lem:hyperellipticlem}. 
Then $\pi(P)=Q$ where $Q\in C_8(K)$.
Let 
\[
P_0=(0,1),\quad P_1=(0,-1)\in C_8(\Q).
\]
Write $J$ for the Jacobian of $C_8$. Let $D$ be the effective degree $4$ divisor obtained from taking the sum of the Galois conjugates of $\Q$. 
Then $D-2P_0-2P_1\in J(\Q)$. 
Using \texttt{Magma}, we find that $J$ has rank $0$ over $\Q$. By applying the aforementioned algorithm of M\"{u}ller and Reitsma \cite{Muller}, we find that the torsion subgroup of $J(\Q)$ is of order $4$. 
The point $[P_1-P_0]$ has order $4$ and thus
\[
J(\Q)=\Z/4\Z\cdot [P_1-P_0].
\]
Therefore
\[
D-2P_0-2P_1\sim a(P_1-P_0)
\]
for some integer $0\leq a\leq 3$.
Therefore 
\[
D = 2P_0+2P_1 + a(P_1-P_0) + \divv(f)
\]
where $f\in L_a=L(2P_0+2P_1 + a(P_1-P_0))$. 
By Corollary~\ref{cor:degree4}, the Riemann--Roch space $L_a$ has dimension $3$ for $a=0$ and dimension $2$ otherwise. For $a=0$, using \texttt{Magma}, we see that a basis of $L_a$ is given by $1, 1/x, 1/x^2$. Similarly to the case of $n=7$, we conclude that $Q\in C_8(\Q)$. 
By Lemma \ref{lem:C8rational},
\[
C_8(\Q)=\{P_0, \;P_1\}.
\]
In this case, we conclude that $P\in F_8(K)$ is a trivial point.

Next suppose $a=1$. Then $D=P_0+3P_1+\divv(f)=2P_1+P_0+P_1+\divv(f)$. Note that $P_0+P_1$
is a hyperelliptic divisor. Thus the linear system is a $g_4^1$ containing a $g_2^1$.
It follows that $D$ is reducible, giving a contradiction. The case $a=3$ is similar.

It remains to consider the case $a=2$. 
Using \texttt{Magma}, we find that a basis of $L_a$ 
is given by 
\[
g\coloneqq \frac{y+1}{x^4},\qquad h:=1 
\]
and so
\[
D = 4P_1 + \divv(g-u)
\]
for some $u\in \Q$. 
As in the proof of the case $n=7$, we view $g$ as a degree $4$ morphism $C_8\rightarrow \PP^1$ and thus $D=g^{-1}(u)$.  
Let $r=x, \,s=(y+1)/x^4$. 
A plane model of $C_8$ is then given by 
\[
C(r,s): \; 4r^4 + r^4s^2 - 2s = 0
\]
and the degree $4$ morphism $g:C\rightarrow \PP^1$ becomes $g(r,s) = s$.
The discriminant of $C(r,s)$ with respect to $r$ is
\[
\Delta_r(C)=-2^{11}s^{3}(s^2+2^2)^3.
\]
Recall that $\Delta(\mathcal{O}_K)$ is a rational square where $\mathcal{O}_{K}$ denotes the ring of integers of $K$, since $\Gal(\tilde{K}/\Q)=A_{4}$.
It follows that $\Delta_r(C)$ is a rational square. 
In other words, $(s,\, \sqrt{\Delta_r(C)}/2^{5}s(s^2+4))\in H(\Q)$ where $H$ is the curve defined by
\[
H:\; y^2=-2x^3-8x.
\]
Moreover $H$ is isomorphic to the elliptic 
curve $E/\Q$ with Cremona label \texttt{64a4} given by the Weierstrass equation
\[
E:\; y^2 = x^3 + 16x.
\]
Using \texttt{Magma}, we find that 
\[
E(\Q) \cong\Z/2\Z.
\]
Thus
\[
H(\Q)=\{\infty, \;(0,0)\}.
\]
It is clear that neither of the rational points on $H$ correspond to a quartic point $(r,s)$ on the plane model $C$. This completes the proof.
\section{Proof of Theorem~\ref{thm:F6}}\label{sec:proofF6}

    Suppose $P=(\alpha:\beta:1)\in F_{6}(K)$. 
   Let
   \[
    C_{6}:\; y^2=-4x^6+1.
    \]
Let
\[
a=\alpha\beta, \quad b=\alpha^6-\beta^6.
\]
Then $Q=(a,b)\in C_{6}(K)$ by Lemma~\ref{lem:hyperellipticlem}. 
We show that $Q$ corresponds to a trivial point $P\in F_{6}(K)$.
Consider the Weierstrass model of the elliptic curve $E_{27}/\Q$ with Cremona label \texttt{27a3} given by 
\[
E_{27}\;:\; y^2 + y = x^3.
\]
In particular, there is a degree $2$ map from $C_{6}$ to the elliptic curve $E_{27}$ given by 
\[
\pi: C_{6}\rightarrow E_{27},\qquad (x,y)\mapsto \left(-x^2,\;\frac{1}{2}(y-1)\right).
\]
Consider now the Weierstrass model of the elliptic curve $E_{432}/\Q$ with Cremona label \texttt{432b1} given by 
\[
E_{432}\;:\; y^2 = x^3 - 4.
\]
There is a degree $2$ map from $C_{6}$ to the elliptic curve $E_{432}$ given by 
\[
C_{6}\rightarrow E_{432},\qquad (x,y)\mapsto \left(\dfrac{1}{x^2},\;\dfrac{y}{x^3}\right).
\]
Let $E/\Q$ be one of the elliptic curves $E_{27}$ or $E_{432}$ with $E(K)=E(\Q)$. Then $Q\in C_{6}(\Q)$ by Lemma~\ref{lem:mapequal}. By Lemma~\ref{lem:C6rational}, 
\[
C_{6}(\Q)=\{(0,1),\;(0,-1)\}.
\]
Since $Q=(a,b)\in C_{6}(\Q)$, this implies that $a=0$ and $\alpha\beta=0$, i.e., $P\in F_6(K)$ is a trivial point.

\section{Proof of Theorem~\ref{thm:F8}}\label{sec:proofF8}

    Suppose $P=(\alpha:\beta:1)\in F_{8}(K)$. 
   Let
   \[
    C_{8}:\; y^2=-4x^8+1.
    \]
Let
\[
a=\alpha\beta, \quad b=\alpha^8-\beta^8.
\]
Then $Q=(a,b)\in C_{8}(K)$ by Lemma~\ref{lem:hyperellipticlem}. 
We work with the Weierstrass model of the elliptic curve $E/\Q$ with Cremona label \texttt{64a4} given by 
\[
    E\;:\; y^2=x^3+x.
\]
Recall there is a degree $2$ map $\pi$ from $C_{8}$ to the elliptic curve $E$ given by 
\[
\pi: C_{8}\rightarrow E,\qquad (x,y)\mapsto \left(\dfrac{1-y}{2x^4},\;\dfrac{y-1}{2x^6}\right).
\]
Recall that $E(K)=E(\Q)$ by assumption, and therefore $Q\in C_{8}(\Q)$ by Lemma~\ref{lem:mapequal}. 
By Lemma~\ref{lem:C8rational},
\[
C_{8}(\Q)=\{(0,1),\; (0,-1)\}.
\]
Thus if $Q=(a,b)\in C_{8}(K)=C_{8}(\Q)$ then $a=0$, and moreover $\alpha\beta=0$, i.e., $P\in F_8(K)$ is a trivial point.\\

All supporting \texttt{Magma} computations 
are available on \texttt{GitHub} at 
\[\text{\url{https://github.com/MaleehaKhawaja/PrimitivePointsFermat}}
\]

\bibliographystyle{abbrv}
\bibliography{PrimitiveFermat.bib}

\begin{thebibliography}{10}

\bibitem{Aigner34}
A.~Aigner.
\newblock {\"U}ber die {M{\"o}glichkeit} von {{\(x^4 + y^4 = z^4\)}} in
  quadratischen {K{\"o}rpern}.
\newblock {\em Jahresber. Dtsch. Math.-Ver.}, 43:226--228, 1934.

\bibitem{Magma}
W.~Bosma, J.~Cannon, and C.~Playoust.
\newblock The {M}agma algebra system. {I}. {T}he user language.
\newblock {\em J. Symbolic Comput.}, 24(3-4):235--265, 1997.
\newblock Computational algebra and number theory (London, 1993).

\bibitem{BremnerChoudhry}
A.~Bremner and A.~Choudhry.
\newblock The {F}ermat cubic and quartic curves over cyclic fields.
\newblock {\em Period. Math. Hungar.}, 80(2):147--157, 2020.

\bibitem{Rohrlich1978}
B.~H. Gross and D.~E. Rohrlich.
\newblock Some results on the {M}ordell-{W}eil group of the {J}acobian of the
  {F}ermat curve.
\newblock {\em Invent. Math.}, 44(3):201--224, 1978.

\bibitem{Hartshorne}
R.~Hartshorne.
\newblock {\em Algebraic geometry}.
\newblock Graduate Texts in Mathematics, No. 52. Springer-Verlag, New
  York-Heidelberg, 1977.

\bibitem{JarvisMeekin}
F.~Jarvis and P.~Meekin.
\newblock The {F}ermat equation over {${\Bbb Q}(\sqrt{2})$}.
\newblock {\em J. Number Theory}, 109(1):182--196, 2004.

\bibitem{Katz}
N.~M. Katz.
\newblock Galois properties of torsion points on abelian varieties.
\newblock {\em Invent. Math.}, 62(3):481--502, 1981.

\bibitem{KhawajaJarvis}
M.~Khawaja and F.~Jarvis.
\newblock Fermat's last theorem over {$\Bbb Q(\sqrt 2,\sqrt 3)$}.
\newblock {\em Algebra Number Theory}, 19(3):457--480, 2025.

\bibitem{khawaja2023primitive}
M.~Khawaja and S.~Siksek.
\newblock Primitive algebraic points on curves.
\newblock {\em Res. Number Theory}, 10(3):Paper No. 57, 20, 2024.

\bibitem{MatthewKlassen1997}
M.~Klassen and P.~Tzermias.
\newblock Algebraic points of low degree on the {F}ermat quintic.
\newblock {\em Acta Arith.}, 82(4):393--401, 1997.

\bibitem{Kraus18}
A.~Kraus.
\newblock Quartic points on the {F}ermat quintic.
\newblock {\em Ann. Math. Blaise Pascal}, 25(1):199--205, 2018.

\bibitem{Mo}
L.~J. Mordell.
\newblock The {D}iophantine equation {$x\sp{4}+y\sp{4}=1$} in algebraic number
  fields.
\newblock {\em Acta Arith.}, 14:347--355, 1967/68.

\bibitem{Muller}
J.~S. M\"uller and B.~Reitsma.
\newblock Computing torsion subgroups of {J}acobians of hyperelliptic curves of
  genus 3.
\newblock {\em Res. Number Theory}, 9(2):Paper No. 23, 26, 2023.

\bibitem{SilvermanAEC}
J.~H. Silverman.
\newblock {\em The arithmetic of elliptic curves}, volume 106 of {\em Graduate
  Texts in Mathematics}.
\newblock Springer-Verlag, New York, 1986.

\bibitem{Stoll17}
M.~Stoll.
\newblock An explicit theory of heights for hyperelliptic {J}acobians of genus
  three.
\newblock In {\em Algorithmic and experimental methods in algebra, geometry,
  and number theory}, pages 665--715. Springer, Cham, 2017.

\end{thebibliography}

\end{document}